\renewcommand{\d}{\mathrm{d}}
\newcommand{\cI}{{\mathcal I}}
\newcommand{\cV}{{\mathcal V}}
\newcommand{\bbR}{{\mathbb R}}
\newcommand{\bbP}{{\mathbb P}}
\newcommand{\GL}{\operatorname{GL}}
\newcommand{\Gr}{\operatorname{Gr}}
\newcommand{\Aut}{\operatorname{Aut}}
\newcommand{\w}{{\mathchoice{\,{\scriptstyle\wedge}\,}{{\scriptstyle\wedge}}
      {{\scriptscriptstyle\wedge}}{{\scriptscriptstyle\wedge}}}}
\newcommand{\lhk}{\mathbin{\hbox{\vrule height1.4pt width4pt depth-1pt 
             \vrule height4pt width0.4pt depth-1pt}}}
\newcommand{\be}{\begin{equation}}
\newcommand{\ee}{\end{equation}}
\newcommand{\bpm}{\begin{pmatrix}}
\newcommand{\epm}{\end{pmatrix}}
\numberwithin{equation}{section}
\newtheorem{theorem}{Theorem}
\newtheorem{lemma}{Lemma}
\theoremstyle{remark}
\newtheorem{remark}{Remark}
\begin{document}

\author[R. Bryant]{Robert L. Bryant}
\address{Duke University Mathematics Department\\
         P.O. Box 90320\\
         Durham, NC 27708-0320}
\email{\href{mailto:bryant@math.duke.edu}{bryant@math.duke.edu}}
\urladdr{\href{http://www.math.duke.edu/~bryant}%
         {http://www.math.duke.edu/\lower3pt\hbox{\symbol{'176}}bryant}}

\title[Hessianizability of surface metrics]
      {Hessianizability of surface metrics}
      
\dedicatory{Dedicated to the memory of Joseph A.~Wolf, whose generous advice, inspiring works, and seemingly inexhaustible knowledge of geometry has been a boon to me and so many others.}

\date{January 30, 2024}

\begin{abstract}
A symmetric quadratic form $g$ on a surface~$M$ is said
to be \emph{locally Hessianizable} if each $p\in M$ has an open
neighborhood~$U$ on which there exists a local coordinate chart
$(x^1,x^2):U\to\mathbb{R}^2$ and a function $f:U\to\mathbb{R}$
such that, on $U$, we have
$$
g = \frac{\partial^2 f}{\partial x^i\partial x^j}\,\d x^i\circ\d x^j.
$$
In this article, I show that, when $g$ is nondegenerate and smooth,
it is always smoothly locally Hessianizable.
\end{abstract}

\subjclass{
 53B05, 
 58A15
}

\keywords{Hessian metrics, Cartan-K\"ahler theory}

\thanks{
Thanks to Duke University for its support via a research grant, 
to the NSF for its support via DMS-0103884 and DMS-1359583
(originally DMS-1105868), 
and to Columbia University for its support via 
a Samuel Eilenberg Visiting Professorship during Spring 2004, 
during which this work was begun (and mostly completed).
}

\maketitle

\setcounter{tocdepth}{2}
\tableofcontents

\section{Introduction}\label{sec: intro} 

A symmetric quadratic differential form $g$ on an $n$-manifold $M^n$ is said to be of \emph{Hessian type} if there exist $(n{+}1)$ functions $x^1,\ldots,x^n, f$ on~$M$ such that $\d x^1\wedge\cdots\wedge\d x^n\not=0$ and such that%
\footnote{Note the use of the Einstein summation convention, which will be employed throughout this article. Also, note the standard use of $\alpha\circ\beta$ 
for $\tfrac12(\alpha\otimes\beta+\beta\otimes\alpha)$, i.e., the symmetrized tensor product.}
$$
g = \frac{\partial^2f}{\partial x^i\partial x^j}\, \d x^i\circ \d x^j\,.
$$
Note that the independence of the differentials $\d x^i$ is needed in order to define the `partial derivatives' in the formula.  One says that $g$ is \emph{locally of Hessian type} if each point of $M$ has an open neighborhood $U\subset M$ on which there exists a coordinate chart $x:U\to\mathbb{R}^n$ and a function $f\in C^\infty(U)$ such that the above formula holds on $U$.  The notion of Hessian type has been considered in many places in the literature (cf.~\cite{Duistermaat2001,MR2293045,Totaro2004} and the papers cited therein) and is connected with affine and K\"ahler geometry and many other areas.

Since symmetric quadratic differential forms $g$ in dimension $n$ depend on $\tfrac12n(n{+}1)$ functions of $n$ variables and the data of a Hessian representation depends only on $(n{+}1)$ functions of $n$ variables, it is clear that, when $n>2$, not every $g$ is locally of Hessian type.  As $n$ increases, the condition of being locally of Hessian type becomes more and more restrictive.  When $g$ is nondegenerate, so that $(M^2,g)$ is a pseudo-Riemannian manifold, being of Hessian type implies algebraic conditions on the Riemann curvature tensor once $n$ is sufficiently large~\cite{Totaro2004,AmariArmstrong2014}.

Meanwhile, when $n=2$, finding Hessianizing data~$(x^1,x^2,f)$ for a given quadratic form $g$ is a determined PDE problem, being three equations for three unknowns, so one might expect formal solvability. However, it is not necessarily true that the PDE system involved is nondegenerate, so even formal solvability is not guaranteed.

For example, the smooth (in fact, real-analytic) quadratic form 
$g = c_{ijk} y^i\,\d y^j\circ\d y^k$, where $c_{ijk}=c_{ikj}$ are constants, is not Hessianizable on a neighborhood of $y=0$ unless $c_{ijk}=c_{jik}$.

However, when $g\in S^2(T^*M)$ is real-analytic \emph{and nondegenerate} at~$p\in M^2$, it is straightforward to show, using the Cartan-K\"ahler Theorem, that $g$ is Hessianizable on an open neighborhood of~$p$.  See Theorem~\ref{thm: involutivity} below, which is the result  to which I alluded in~\cite{MO122308}.  (The reader might also want to compare the argument in~\cite{AmariArmstrong2014}, which was arrived at independently.)

It is natural to ask whether this Hessianizability result continues to hold in the smooth setting.  It turns out that the PDE involved is never elliptic, so one never gets existence and/or regularity by applying results from elliptic theory.  In fact, the complex characteristic variety of the PDE along regular initial conditions consists of $3$ points, so at least one of the characteristics has to be real.  

Now, it turns out that, when $g$ is nondegenerate, one can always find regular initial conditions for the PDE with $3$ real, distinct characteristics, i.e., initial conditions for which the PDE problem is hyperbolic.  Thus, one finds that one can invoke results of Yang~\cite{MR0897707} on involutive hyperbolic systems to prove smooth local Hessianizability of nondegenerate $g$ that are merely smooth.  See Theorem~\ref{thm: smooth}.

For details on the notation and definitions in the use of the Cartan-K\"ahler Theorem, which will be used to prove Theorem~\ref{thm: involutivity}, the reader is referred to~\cite{MR1992h:58007}.

\section{The structure equations}
Let $M^2$ be a surface, and let $g$ be a metric on~$M$,
i.e., a nondegenerate quadratic form on $M$.  
Thus, in a local coordinate chart $y = (y^i):U\to\bbR^2$, 
one has $g = g_{ij}(y)\,\d y^i\,\d y^j$, 
where $g_{11}g_{22}-g_{12}^2$ is nonvanishing.  
(One does not need to assume that $g$ be definite.)  

Let $\pi:F\to M$ denote the coframe bundle of~$M$, 
whose typical element~$u\in F$ is an isomorphism~$u:T_{\pi(u)}M\to\bbR^2$,
with its right action by~$A\in\GL(2,\bbR)=\Aut(\bbR^2)$ 
defined to be~$R_A(u) = u\cdot A = A^{-1}\circ u$.  
There is an $\bbR^2$-valued~$1$-form~$\omega:TF\to\bbR^2$ 
defined by~$\omega_u(v) = u\bigl(\pi'(v)\bigr)$ for~$v\in T_uF$.
Its components~$\omega = (\omega^i)$ 
are the canonical (`soldering') forms~$\omega^i$ 
(where $1\le i\le 2$). 
Let $G_{ij} = G_{ji}$ be the functions on~$F$ 
such that $\pi^*g = G_{ij}\,\omega^i{\circ}\omega^j$
and, as usual, let $G^{ij}=G^{ji}$ satisfy~$G^{ik}G_{kj} = \delta^i_j$.

Let $\omega^i_j$ be the Levi-Civita connection forms of~$g$ on~$F$.
These forms, together with the $\omega^i$, define a coframing of~$F$,
and they are uniquely specified by the requirements that they satisfy
\emph{compatibility} with $g$,
\begin{equation}\label{eq: Gcompat}
\d G_{ij} = G_{ik}\,\omega^k_j + G_{kj}\,\omega^k_i\,,
\end{equation}
and the \emph{first structure equation},
\begin{equation}\label{eq: firstSE}
\d\omega^i = -\omega^i_j\w\omega^j.
\end{equation}
Furthermore, they satisfy the \emph{second structure equation},
\begin{equation}\label{eq: secondSE}
\d\omega^i_j = -\omega^i_k\w\omega^k_j + G_{jl} K\,\omega^i\w\omega^l,
\end{equation}
where the function $K$ is the \emph{Gauss curvature} of~$g$ pulled up to $F$
via the projection $\pi:F\to M$.

It will be convenient to define $\omega_{ij}=G_{ik}\,\omega^k_j$,
so that the equations above become $\d G_{ij} = \omega_{ij}+\omega_{ji}$,
and to define $\omega_i = G_{ij}\,\omega^j$, 
so that $\pi^*g = \omega_i\circ\omega^i$.  
Note that one has $\omega_i\w\omega^i = 0$, since $G_{ij}= G_{ji}$.

\section{An exterior differential system for Hessianizability}
If~$U\subset M$ is an open subset on which there exist coordinates
$x = (x^i):U\to\bbR^2$ and a function $f$ such that, on~$U$, one has
$$
g = \frac{\partial^2 f}{\partial x^i\partial x^j}\,\d x^i\circ\d x^j
   = \d\left(\frac{\partial f}{\partial x^1}\right)\circ \d x^1 
   + \d\left(\frac{\partial f}{\partial x^2}\right)\circ \d x^2,
$$
then the coordinate coframing~$\xi = (\d x^i)$ defines a section $\xi:U\to F$
that satisfies $\xi^*(\omega^i) = \d x^i$ 
and $\xi^*(G_{ij}) = \frac{\partial^2 f}{\partial x^i\partial x^j}$.  
Consequently, $\xi^*(\omega_i) = \d\left(\frac{\partial f}{\partial x^i}\right)$.  
In particular, the section $\xi:U\to F$ satisfies
$$
\xi^*\bigl(\d\omega^i\bigr)  = \d\bigl(\d x^i)= 0
\quad\text{and}\quad
\xi^*(\d\omega_i)  
= \d\left(\d \left(\frac{\partial f}{\partial x^i}\right)\right)= 0
$$
while $\xi^*(\omega^1\w\omega^2) = \d x^1\w\d x^2\not=0$.

Conversely
\footnote{Here and throughout this article, when $S\subset M$ is a submanifold
and $\tau$ is a contravariant tensor field on $M$ (e.g., a differential form
or a metric on $M$), the notation $S^*(\tau)$ 
denotes the \emph{pullback} of $\tau$ to $S$, which the reader should take
care not to confuse with $\tau_{|S}$, i.e., the \emph{restriction} of $\tau$ to $S$.}
, if $S\subset F$ is a surface such that $S^*(\d\omega^i) = S^*(\d\omega_i) = 0$ 
while $S^*(\omega^1\w\omega^2)\not=0$, then
each point of~$S$ has an open neighborhood~$V\subset S$ 
that is the image of a local section $\xi:U\to F$ for some open set $U\subset M$.
When $U$ is simply connected, one has $\xi^*\omega^i = \d x^i$ for some
independent functions $x^i$ on $U$, while $\xi^*\omega_i = \d p_i$ for 
some functions $p_i$ on~$U$.  Since $\omega_i\w\omega^i = 0$, 
it follows that $\d(p_i\,\d x^i) = \d p_i\w\d x^i = 0$, 
so that there exists a function~$f$ on~$U$ such that $\d f =  p_i\,\d x^i$,
i.e., $p_i = \frac{\partial f}{\partial x^i}$.
Thus,
$$
g = \xi^*(\omega_i\circ\omega^i) = \d p_i\circ\d x^i 
  =\frac{\partial^2 f}{\partial x^i\partial x^j}\,\d x^i\circ\d x^j,
$$
as desired.

Letting $\cI$ be the differential ideal on~$F$
generated by the four $2$-forms~$\d\omega^i$ and $\d\omega_i$ for $i=1,2$,
it then follows that the problem of locally writing $g$ in Hessian form 
is equivalent to finding the integral surfaces of~$\cI$
on which the $2$-form $\omega^1\w\omega^2$ is nonvanishing. 

\subsection{A prolongation}
The ideal with independence condition
$\bigl(\cI,\omega^1\w\omega^2\bigr)$ is not quite the right ideal
to consider, for it does not contain all of the forms on~$F$ 
that vanish on Hessianizing sections.  To see this, note that, 
by definition, $G_{ij}\d\omega^j = - \omega_{ij}\w\omega^j$
while, since $\d G_{ij} = \omega_{ij}+\omega_{ji}$, one has that
$$
\d\omega_i = \d G_{ij}\w\omega^j + G_{ij}\d\omega^j = \omega_{ji}\w\omega^j.
$$
Thus,~$\cI$ is generated by the four $2$-forms~$\omega_{ij}\w\omega^j$
and~$\omega_{ji}\w\omega^j$  for $i=1,2$.  In particular, $\cI$ contains the
pair of $2$-forms~$(\omega_{12}-\omega_{21})\w\omega^j$ for $j=1,2$, 
and, consequently, any integral surface of~$\cI$ 
on which $\omega^1\w\omega^2\not=0$
must also be an integral surface of the $1$-form 
$\theta = \tfrac12(\omega_{12}-\omega_{21})$. 
Thus, one can add~$\theta$ and its exterior derivative $\d\theta$ 
to~$\cI$ to form a larger ideal~$\cI_+$ with the property
that the integral surfaces of $\bigl(\cI_+,\omega^1\w\omega^2)$
are the same as the integral surfaces of $\bigl(\cI,\omega^1\w\omega^2)$.
Henceforth, I will work with this larger ideal, which is a partial
prolongation of~$\bigl(\cI,\omega^1\w\omega^2)$.

Write $\sigma_{ij} = \tfrac12(\omega_{ij}+\omega_{ji})$ 
($= \tfrac12\d G_{ij}$), so that $\omega_{ij}\equiv\sigma_{ij}\mod\theta$
and 
$$
-G_{ij}\d\omega^j\equiv\d\omega_i\equiv \sigma_{ij}\w\omega^j\mod\theta.
$$
Note that the forms $\omega^1$,
$\omega^2$, $\theta$, $\sigma_{11}$, $\sigma_{12}$, and $\sigma_{22}$ 
are everywhere linearly independent and hence define a coframing of~$F$.
Using the second structure equations, one finds
$$
\d\theta \equiv G^{kl}\sigma_{k1}\w\sigma_{l2} + K\,\omega_1\w\omega_2
\mod \theta.
$$ 

Thus, the differential ideal $\cI_+$ is generated algebraically by $\theta$
and three $2$-forms:
\begin{equation}\label{eq: I+ 2-forms}
\begin{aligned}
\Omega_1 &= \sigma_{1j}\w\omega^j\,,\\
\Omega_2 &= \sigma_{2j}\w\omega^j\,,\\
\Theta &= G^{kl}\sigma_{k1}\w\sigma_{l2} + K\,\omega_1\w\omega_2\,.
\end{aligned}
\end{equation} 

It is easy to see that the right action of $\mathrm{GL}(2,\mathbb{R})$
on $F$ preserves the ideal 
with independence condition~$\bigl(\cI_+,\omega^1\w\omega^2\bigr)$.
In particular, this right action prolongs to an action of~$\mathrm{GL}(2,\mathbb{R})$
on the space $\cV_2\bigl(\cI_+,\omega^1\w\omega^2\bigr)\subset \Gr_2(TF)$, 
which consists of the $2$-dimensional integral elements 
of $\cI_+$ on which $\omega^1\w\omega^2$ is nonvanishing. 

\section{Involutivity}
The pieces are now in place to prove the main result. 

\begin{theorem}\label{thm: involutivity} 
The differential ideal with independence condition~$\bigl(\cI_+,\omega^1\w\omega^2\bigr)$
is involutive, with Cartan characters~$s_0=1$, $s_1=3$, and $s_2 = 0$.  

If $g$ is real analytic, 
then the integral surfaces of~$\bigl(\cI_+,\omega^1\w\omega^2\bigr)$
depend on three arbitrary functions of one variable.
In particular, each point of~$M$ has an open neighborhood on which $g$
can be written in Hessian form.
\end{theorem}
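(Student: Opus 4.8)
The plan is to confirm, via Cartan's test, that a generic integral element of $\bigl(\cI_+,\omega^1\w\omega^2\bigr)$ is K\"ahler-regular, and then to invoke the Cartan--K\"ahler Theorem. Since $F$ is six-dimensional with the coframing $\omega^1,\omega^2,\theta,\sigma_{11},\sigma_{12},\sigma_{22}$, I would begin by describing the integral elements concretely. On a $2$-plane $E\subset T_uF$ on which $\omega^1\w\omega^2\not=0$ and $\theta=0$, one may write $\sigma_{ij}=p_{ijk}\,\omega^k$ with $p_{ijk}=p_{jik}$. A direct computation shows that $\Omega_1|_E=\Omega_2|_E=0$ is equivalent to the total symmetry $p_{ijk}=p_{(ijk)}$, cutting the six coefficients down to a $4$-parameter family, while $\Theta|_E=0$ becomes a single quadratic equation $Q(p)=0$ whose quadratic part is assembled from the $G^{ij}$ and whose constant term is $K\det(G_{ij})$.

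Next I would record that nondegeneracy of $g$ makes $Q$ a nondegenerate, indeed indefinite, quadric: the coefficients $p_{111}$ and $p_{222}$ enter $Q$ only through cross terms, so $\{Q=0\}$ is a nonempty smooth hypersurface in the $4$-parameter space over every point of $F$. Consequently $\cV_2\bigl(\cI_+,\omega^1\w\omega^2\bigr)$ fibers over $F$ with smooth three-dimensional fibers, so $\dim\cV_2=6+3=9$, and admissible integral elements exist over every $u\in F$. I expect this to be the crux of the argument, for it is exactly here that nondegeneracy of $g$ is used, and it simultaneously supplies both the existence of integral elements meeting the independence condition and the smoothness (hence regularity) needed for Cartan's test.

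I would then compute the Cartan characters along a generic flag $0\subset E_1\subset E_2\subset T_uF$. The only $1$-form in $\cI_+$ is $\theta$, so $s_0=c_0=1$. Choosing a generic $e_1\in E_2$, the polar equations of $E_1$ are $\theta$ together with the hooks $e_1\lhk\Omega_1$, $e_1\lhk\Omega_2$, and $e_1\lhk\Theta$; the first three are independent and involve $\sigma_{11}$ and $\sigma_{12}$ but not $\sigma_{22}$, whereas $e_1\lhk\Theta$ involves $\sigma_{22}$ with a coefficient (namely $G^{12}p_{111}+G^{22}p_{112}$ for the representative $e_1$) that is nonzero for generic $e_1$. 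Hence all four are independent, $H(E_1)$ has codimension $c_1=4$, and $s_1=c_1-c_0=3$.

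Finally, Cartan's test compares the codimension of $\cV_2$ in $\Gr_2(TF)$ (of dimension $6+2\cdot4=14$), namely $14-9=5$, with $c_0+c_1=1+4=5$; the equality certifies that the generic integral element is K\"ahler-regular, so $\bigl(\cI_+,\omega^1\w\omega^2\bigr)$ is involutive, and the identity $\dim\cV_2=\dim F+s_1+2s_2$ then forces $s_2=0$. Because $\cI_+$ is differentially closed by construction (the second structure equation was already used to evaluate $\d\theta$), the Cartan--K\"ahler Theorem applies once $g$ is real-analytic: through every point of $F$ there pass integral surfaces with $\omega^1\w\omega^2\not=0$, and the general such surface depends on $s_2=0$ functions of two variables together with $s_1=3$ functions of one variable. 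Projecting any one of them to $M$ and running the converse construction given above exhibits $g$ in Hessian form on a neighborhood of any prescribed point, which is the asserted local Hessianizability.
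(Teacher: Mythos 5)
Your proposal is correct in substance, but it verifies involutivity by a genuinely different mechanism than the paper. The paper exhibits a regular flag directly: at each $u\in F$ it takes $v\in T_uF$ with $\theta(v)=0$, computes the three hooks $v\lhk\Omega_1$, $v\lhk\Omega_2$, $v\lhk\Theta$, and shows that, except when $\sigma_{ij}(v)=\lambda G_{ij}(u)$, the numbers $a^i=\omega^i(v)$ can be chosen so that the polar space $H(\bbR v)$ is exactly $2$-dimensional with $\omega^1\w\omega^2$ nonvanishing on it; the ordinary integral element is then produced as this polar space and the characters $s_0=1$, $s_1=3$, $s_2=0$ are read off the flag. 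You instead parametrize $\cV_2\bigl(\cI_+,\omega^1\w\omega^2\bigr)$ by the totally symmetric $p_{ijk}$ subject to the quadric \eqref{eq: E^*Theta=0}, get $\dim\cV_2=9$, and apply Cartan's test: codimension $14-9=5=c_0+c_1$ in $\Gr_2(TF)$. Both are legitimate; in fact your dimension count reproduces computations the paper defers to its Section 5 on the characteristic variety (where the nondegeneracy and indefiniteness of the fiber quadric are established). What the paper's flag construction buys is explicit control of the regularity locus, which it then exploits in the analytic step: rather than only invoking the corollary of Cartan--K\"ahler that an ordinary integral element is tangent to some integral surface (as you do), it constructs regular analytic integral curves $\Gamma\subset\pi^{-1}(A)$ over an arbitrary embedded analytic curve $A\subset M$, yielding Hessianizations on neighborhoods of entire curves, not just points. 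Your route is shorter for the purely local statement and makes the generality count ($s_1=3$ functions of one variable, via $\dim\cV_2=\dim F+s_1+2s_2$) transparent.

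Two points need repair, though neither is fatal. First, your claim that $\{Q=0\}$ is a \emph{smooth} hypersurface in the fiber over \emph{every} point of $F$ is false: since $Q(p)=q(p)+K\det(G_{ij})$ with $q$ nondegenerate, the fiber is singular precisely at $p=0$ when $K(u)=0$ (a quadric cone with vertex at the origin); the paper is careful to say ``smooth away from the $2$-plane defined by $p_{ijk}(E)=K(u)=0$.'' Cartan's test only needs smoothness of $\cV_2$ near the chosen generic $E$, and since each fiber is a $3$-dimensional quadric, points with $p\not=0$ --- hence ordinary elements admitting your rank-$4$ flag --- exist over every $u\in F$, which is exactly what your final ``through every point of $F$'' assertion requires; so state the smoothness claim on the locus $p\not=0$ and note its fiberwise nonemptiness. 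Second, your assertion that the polar equations $\theta$, $e_1\lhk\Omega_1$, $e_1\lhk\Omega_2$ do not involve $\sigma_{22}$ holds only for the representative with $\omega^2(e_1)=0$ (as your coefficient $G^{12}p_{111}+G^{22}p_{112}$ tacitly assumes); in general $e_1\lhk\Omega_2$ contains the term $-\omega^2(e_1)\,\sigma_{22}$, so the choice of representative should be made explicit before computing $c_1=4$.
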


\begin{proof}
Because $\cI_+$ is generated in degree~$1$ 
by the nonvanishing $1$-form~$\theta$, all of the $0$-dimensional
integral elements are both ordinary and regular, and all of its 
$1$-dimensional integral elements are ordinary.

Thus, to prove involutivity, it suffices to show that, at each~$u\in F$, 
there exists a regular $1$-dimensional integral element~$E_1\subset T_uF$
for which~$\omega^1\w\omega^2$ is nonvanishing on the polar space~$H(E_1)$.  

To this end, let $v\in T_uF$ be the basis 
of a $1$-dimensional integral element, 
i.e., $\theta(v) = 0$ while~$v$ is nonzero.  
Then the polar space of~$E_1 = \bbR v$ is
$$
H(E_1) 
= \{\ w\in T_uF\ \vrule\ 
   \theta(w)=\Omega_1(v,w)=\Omega_2(v,w)=\Theta(v,w) = 0\ \},
$$
and hence has dimension at least~$2$.
Set $a^i = \omega^i(v)$ and $s_{ij} = \sigma_{ij}(v)$.  Then
\begin{equation}
\begin{aligned}
v\lhk\Omega_1 
&= -a^1\,\sigma_{11}-a^2\,\sigma_{12}+s_{11}\,\omega^1+s_{12}\,\omega^2,\\
v\lhk\Omega_2 
&= -a^1\,\sigma_{12}-a^2\,\sigma_{22}+s_{12}\,\omega^1+s_{22}\,\omega^2,\\
v\lhk\Theta\phantom{_2} 
&=  G^{kl}(s_{k1}\sigma_{l2}-s_{l2}\sigma_{k1}) 
+ K(G_{11}G_{22}-{G_{12}}^2)\,(a^1\,\omega^2-a^2\,\omega^1).\\
\end{aligned}
\end{equation}
Thus, $H(E_1)$ has dimension~$2$ 
and $\omega^1\w\omega^2$ is nonvanishing on~$H(E_1)$ exactly when
\begin{equation}\label{eq: inequality1}
(a^1)^2\,s^2_1 - a^1a^2\,(s^1_1-s^2_2) - (a^2)^2\,s^1_2\not=0,
\end{equation}
where, for brevity, I have written $s^i_j = G^{ik}(u)s_{kj}$.  
In particular, except when $s_{ij} = \lambda G_{ij}(u)$ for some $\lambda$,
it will always be possible to choose $a^1$ and $a^2$ 
so that the inequality~\eqref{eq: inequality1} holds. 
For a $v\in T_uF$ so chosen, 
the integral element $E_1=\bbR v$ is regular 
and, hence, its $2$-dimensional polar space~$H(E_1)=E_2$ 
is an ordinary integral element of~$\bigl(\cI_+,\omega^1\w\omega^2\bigr)$.
Moreover, the characters of the ordinary flag $(0)\subset E_1\subset E_2$ 
are visibly $s_0=1$, $s_1=3$, and $s_2 = 0$ 
and $\bigl(\cI_+,\omega^1\w\omega^2\bigr)$ is involutive.

Moreover, when $g$ is real analytic, 
the Cartan-K\"ahler Theorem applies, 
yielding that any analytic curve~$\Gamma\subset F$ 
that satisfies $\Gamma^*\theta=0$ and $\Gamma^*C\not=0$,
where~$C$ is the characteristic cubic form%
\footnote{For use below,
I note that $R_A^*(C) = \det(A)\,C$ for $A\in\GL(2,\bbR)$.}
\begin{equation}\label{eq: Ccharcubic}
C = (\omega^1)^2\circ (G^{2j}\sigma_{j1}) 
    - \omega^1\circ\omega^2\circ (G^{1j}\sigma_{j1}-G^{2j}\sigma_{j2})
    - (\omega^2)^2\circ (G^{1j}\sigma_{j2}), 
\end{equation}
lies in a unique (ordinary) analytic integral surface 
of~$\bigl(\cI_+,\omega^1\w\omega^2\bigr)$.  

When $g$ (and hence~$\cI_+$) is real analytic, 
it is easy to construct such analytic curves~$\Gamma\subset F$:
Let $A\subset M$ be any connected, noncompact, analytic,
embedded curve, on which there is a parameter~$t:A\to(a,b)\subset \bbR$ 
and let $F_A = \pi^{-1}(A)\subset F$.
Then $F_A$ is a $5$-dimensional embedded, analytic submanifold
on which $\omega^1\w\omega^2$ vanishes, although $\omega^1$ 
and~$\omega^2$ do not simultaneously vanish.  
Thus, there exist two functions~$a^i:F_A\to\bbR$ for $i=1,2$ 
that do not simultaneously vanish such that $\omega^i=a^i\,\pi^*(\d t)$
on~$F_A$.  Then let $\Gamma\subset F_A$ be any real analytic
curve in~$F_A$ that projects diffeomorphically onto~$A$
and satisfies~$\Gamma^*\theta=0$ while $\Gamma^*\psi\not=0$
where~$\psi$ is the $1$-form on~$F_A$ defined by
$$
\psi = (a^1)^2\,G^{2k}\sigma_{k1} 
- a^1a^2\,(G^{1k}\sigma_{k1}-G^{2k}\sigma_{k2}) 
 - (a^2)^2\,G^{1k}\sigma_{k2}\,.
$$
Such a $\Gamma$ will then be a regular integral curve 
of~$\cI_+$ lying in a unique integral surface~$S\subset F$
of~$\bigl(\cI_+,\omega^1\w\omega^2\bigr)$.  
Moreover, an open neighborhood of~$\Gamma$ in~$S$ 
will project diffeomorphically onto an open neighborhood~$U$ 
of $A$ in~$M$, and the inverse map~$\xi:U\to S$ will be
a Hessianizing section for~$g$ on~$U$.

In particular, for any given point~$u\in F$,
an integral surface of~$\bigl(\cI_+,\omega^1\w\omega^2\bigr)$
exists that passes through~$u$.
  
Since the Cartan characters of the system are $s_0=1$, $s_1=3$,
and $s_2=0$, it follows that the ordinary integral surfaces
depend on three functions of one variable~\cite{MR1992h:58007}.

As explained above, this implies that the metric~$g$ 
is locally Hessianizable if it is real analytic.
\end{proof} 

\section{Hyperbolicity and smooth Hessianizability}
\label{ssec: hyperbolicity}

In order to prove Hessianizability in the smooth category, 
it will be necessary to take a closer look at the structure 
of the ideal with independence condition~$(\cI_+,\omega^1\w\omega^2)$.

\subsection{The characteristic variety}
Let~$\cV_2(\cI_+,\omega^1\w\omega^2)\subset\Gr_2(TF)$
denote the locus of $2$-dimensional integral elements 
of~$(\cI_+,\omega^1\w\omega^2)$.  A $2$-plane~$E\subset T_uF$
belongs to~$\cV_2(\cI_+,\omega^1\w\omega^2)$ if and only
if it is defined by $4$ Pfaffian equations of the form
\begin{equation}
\theta = \sigma_{ij}- p_{ijk}(E)\,\omega^k = 0,
\end{equation}
where the numbers $p_{ijk}(E)=p_{jik}(E)$ satisfy
the linear relations $p_{ijk}(E)=p_{ikj}(E)$
(which are equivalent to the vanishing of $\Omega_i$ on~$E$) 
and the quadratic relation
\begin{equation}\label{eq: E^*Theta=0}
\begin{aligned}
0 &= G^{kl}(u)\bigl(p_{k11}(E)p_{l22}(E){-}p_{k12}(E)p_{l12}(E)\bigr)\\
&\qquad\qquad + K(u)\bigl(G_{11}(u)G_{22}(u){-}G_{12}(u)^2\bigr)
\end{aligned}
\end{equation}
(which is equivalent to the vanishing of~$\Theta$ on~$E$).
The quadratic form in the $4$ independent quantities
$p_{111}(E)$, $p_{112}(E)$, $p_{122}(E)$, and $p_{222}(E)$
that makes up the first term of this relation 
is easily seen to be nondegenerate and indefinite.  
Hence, this exhibits $\cV_2(\cI_+,\omega^1\w\omega^2)$ 
as a hypersurface in $F\times\bbR^4$ whose fiber
over~$u\in F$ is a (nonempty) $3$-dimensional hyperquadric 
that is smooth away from the $2$-plane defined by $p_{ijk}(E)=K(u)=0$.

Meanwhile, the integral element~$E$ will be ordinary 
(i.e., it will contain a regular $1$-dimensional integral element) 
if and only if $E^*(C)\not=0$, 
since $\Xi_E\subset\bbP E^*$, i.e., the characteristic variety of~$E$, 
consists of the points represented by the linear factors of the cubic form 
\begin{equation}\label{eq: E^*C}
E^*(C)=\epsilon_{jk}G^{kl}(u)p_{lmn}(E)\,\,\omega^j\circ\omega^m\circ\omega^n.
\end{equation}
(Here, $\epsilon_{ij}=-\epsilon_{ji}$ and $\epsilon_{12}=1$.)
It follows that $E$ is ordinary 
unless~$p_{ijk}(E)=0$ for all $(i,j,k)$ 
which, by the above quadratic relation, can only happen if $K(u)=0$.

\begin{remark}[Non-ordinary integral surfaces]
An integral surface of the ideal $\bigl(\cI_+,\omega^1\w\omega^2\bigr)$
on which $C$ vanishes identically is a so-called `non-ordinary'
integral surface, i.e., it is an integral surface of $\bigl(\cI_+,\omega^1\w\omega^2\bigr)$ that contains no regular integral curves. 
Since $E^*(C)$ vanishes if and only if $p_{ijk}(E)=0$, 
it follows from \eqref{eq: E^*Theta=0} 
that non-ordinary integral surfaces 
can only exist over open sets in~$M^2$ on which the curvature $K$ vanishes,
and, in this case, they are the integral surfaces of the Frobenius
system generated by~$\theta$ and the $\sigma_{ij}$.  
Thus, when~$K\equiv0$, the coframe bundle~$F$ is foliated by
the $4$-parameter family of leaves of this system.

Conversely, if $g$ is a nondegenerate quadratic form on~$M$ 
with vanishing curvature, then each point~$p\in M$ has a neighborhood
on which there exist $p$-centered local coordinates~$(x^1,x^2)$
in which $g = c_{ij}\,\d x^i\,\d x^j$, 
where the $c_{ij}=c_{ji}$ are constants, 
so that $g$ is in Hessian form with $f = \tfrac12c_{ij}\,x^ix^j$.  
Such constant coefficient Hessianizations 
describe the non-ordinary integral surfaces.   
\end{remark}

\subsection{Hyperbolicity}

The (complex) characteristic variety of the ideal $\cI_+$ 
meets each cotangent space of each ordinary integral surface 
in the three linear factors of the cubic form~$C$ 
(as defined in~\eqref{eq: Ccharcubic}).  Since a nonvanishing cubic
in $2$ variables always has at least one real linear factor,
it follows that the real characteristic variety is never empty,
so the system is never elliptic.  
Therefore, the best behavior one could hope for is that the
characteristic variety at each point would consist of three
real distinct factors, which would be the hyperbolic case.

Recall that, for a cubic form~$c\in S^3(E^*)$, 
where $E$ is a $2$-dimensional vector space over $\mathbb{R}$, 
its discriminant $\Delta(c)$ is an element of $S^6\bigl(\Lambda^2(E^*)\bigr)$.  
When $(e^1,e^2)$ is a basis of $E^*$ and $c$ is expressed in the form
\begin{equation}\label{eq: cubic form c}
c = c_{0}\,(e^1)^3 + 3c_{1}\,(e^1)^2e^2+ 3c_{2}\,e^1(e^2)^2 + c_{3}\,(e^2)^3,
\end{equation}
then 
\begin{equation}\label{eq: discrim c}
\Delta(c)
= \bigl(6{c_{0}}{c_{1}}{c_{2}}{c_{3}}+3{c_{1}}^2{c_{2}}^2
-4{c_{0}}{c_{2}}^3-4{c_{1}}^3{c_{3}}-{c_{0}}^2{c_{3}}^2
\bigr)\,(e^1\w e^2)^{\otimes 6}.
\end{equation}
Then $\Delta(c)=0$ if and only if $c$ has a multiple linear factor.
Because the scalar field is $\mathbb{R}$, 
the inequality $\Delta(c)>0$ makes sense, and this inequality holds 
if and only if $c$ has three real, distinct, linear factors.

Let~$\cV^h_2(\cI_+,\omega^1\w\omega^2)\subset\cV_2(\cI_+,\omega^1\w\omega^2)$
denote the open subset that consists of integral elements~$E$
for which the characteristic cubic~$E^*(C)$ is nonzero 
and has three real, distinct, linear factors.  
By definition, the elements of $\cV^h_2(\cI_+,\omega^1\w\omega^2)$
are ordinary. I will refer to $\cV^h_2(\cI_+,\omega^1\w\omega^2)$ 
as the \emph{hyperbolic locus}. 

\begin{lemma}\label{lem: hyperbolic}
The basepoint projection $\cV^h_2(\cI_+,\omega^1\w\omega^2)\to M$ 
is a surjective submersion.
\end{lemma}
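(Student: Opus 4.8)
The plan is to realize $\cV_2(\cI_+,\omega^1\w\omega^2)$ as the hypersurface $\{\Phi=0\}\subset F\times\bbR^4$ already exhibited above, where, writing $(p_{ijk})$ for the symmetric array of fiber coordinates,
\[
\Phi(u,p) = G^{kl}(u)\bigl(p_{k11}p_{l22}-p_{k12}p_{l12}\bigr)+K(u)\bigl(G_{11}(u)G_{22}(u)-G_{12}(u)^2\bigr).
\]
On the hyperbolic locus one has $E^*(C)\not=0$, hence $p\not=0$, so $\cV^h_2(\cI_+,\omega^1\w\omega^2)$ lies in the smooth locus of $\cV_2(\cI_+,\omega^1\w\omega^2)$ and is a smooth $9$-manifold. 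For the submersion statement I would first show that the projection $\rho\colon\cV_2(\cI_+,\omega^1\w\omega^2)\to F$ is a submersion along the hyperbolic locus: since the quadratic form $p\mapsto G^{kl}(u)(p_{k11}p_{l22}-p_{k12}p_{l12})$ is nondegenerate, its $p$-gradient---which is the entire vertical part of $\d\Phi$, the term $K\det G$ depending only on $u$---is nonzero wherever $p\not=0$; thus, given any $X\in T_uF$, one can solve $\d\Phi(X,Y)=0$ for a vertical $Y$, producing a tangent vector to $\cV_2$ that projects to $X$. Composing $\rho$ with $\pi\colon F\to M$ then gives that $\cV^h_2(\cI_+,\omega^1\w\omega^2)\to M$ is a submersion.

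The content of the lemma is therefore the surjectivity. Fixing $p\in M$, I would use that the right $\GL(2,\bbR)$-action is transitive on $\pi^{-1}(p)$ and preserves the hyperbolic locus (since $R_A^*(C)=\det(A)\,C$ only rescales the characteristic cubic) to reduce to a convenient coframe $u$ in which $G(u)$ is a diagonal normal form $\mathrm{diag}(\varepsilon_1,\varepsilon_2)$ determined by the signature of $g$. The key algebraic point is that the linear map carrying the array $(p_{ijk})$ to its characteristic cubic $E^*(C)$ of~\eqref{eq: E^*C} is a linear isomorphism onto the $4$-dimensional space of binary cubic forms $c=c_0\,x^3+3c_1\,x^2y+3c_2\,xy^2+c_3\,y^3$ (here $x=\omega^1$, $y=\omega^2$). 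Under this isomorphism the single relation~\eqref{eq: E^*Theta=0} becomes an inhomogeneous quadratic equation $Q(c)=-K(u)\det(G(u))$, where $Q$ is an explicit quadratic form in $(c_0,c_1,c_2,c_3)$ depending only on $G(u)$. Since the right-hand side is one fixed real number, surjectivity over $p$ reduces to showing that $Q$ maps the open cone $\{\Delta(c)>0\}$ of cubics with three distinct real linear factors onto all of $\bbR$.

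To establish this I would exhibit hyperbolic cubics realizing positive, zero, and negative values of $Q$; because $Q$ is homogeneous of degree two and $\Delta$ of degree four, the rescaling $c\mapsto t\,c$ then sweeps each nonzero value across the corresponding half-line, while the value $0$ is attained at a genuine (nonzero) hyperbolic cubic. For the indefinite normal form $G(u)=\mathrm{diag}(1,-1)$ a computation gives $Q=c_0^2-3c_0c_2+3c_1c_3-c_3^2$, and the three cubics $x^3-3xy^2$, $xy(x-y)$, $y^3-3x^2y$ yield $Q=+4,\,0,\,-4$. For the definite form $G(u)=\mathrm{diag}(1,1)$ one gets $Q=c_0^2+3c_0c_2+3c_1c_3+c_3^2$; expressing $Q$ through the elementary symmetric functions $e_1,e_2,e_3$ of the three roots (so that $Q=1+e_2+e_1e_3+e_3^2$ after normalizing $c_0=1$) shows that roots $\{R,-R,0\}$ give $Q<0$ for $R>1$, while three large, nearly-equal roots make the $e_3^2$ term dominate and force $Q>0$, with $Q=0$ again realized by $xy(x-y)$; the negative-definite case follows by negating $Q$. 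I expect this last step to be the main obstacle, since the most obvious hyperbolic cubics produce only one sign of $Q$ in the definite case, so some care is needed to produce a hyperbolic cubic of each sign and thereby confirm that the hyperbolic cone meets the constraint quadric for every value of the curvature and every signature.
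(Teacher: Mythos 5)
Your proposal is correct, and it reaches the surjectivity by a genuinely different route than the paper. For the submersion part, the paper disposes of it in one line by appealing to general Cartan--K\"ahler theory (every element of $\cV^h_2(\cI_+,\omega^1\w\omega^2)$ is K\"ahler-regular and $\cI_+$ is generated in positive degree), whereas you verify it by hand from the defining equation $\Phi=0$ of the hypersurface in $F\times\bbR^4$; your implicit-function argument is sound, since the hyperbolic locus lies in $\{p\neq 0\}$ where the $p$-gradient of the nondegenerate quadratic term is nonzero. For the surjectivity, the paper also uses the $\GL(2,\bbR)$-action, but reduces to a coframe $u$ at which $G_{11},G_{12},G_{22}$ are \emph{all nonzero} (rather than diagonal) and then restricts to a clever two-parameter slice $L_u$, with $p_{111}=G_{11}s_1$, $p_{112}=G_{12}s_1$, $p_{122}=G_{12}s_2$, $p_{222}=G_{22}s_2$: on this slice the characteristic cubic automatically contains $\omega^1$ and $\omega^2$ as factors, so hyperbolicity reduces to two linear inequalities in $(s_1,s_2)$, and \eqref{eq: E^*Theta=0} becomes a conic (a hyperbola or a transverse line pair, according to the sign of $K$) that visibly escapes the two bad lines --- a single uniform two-dimensional picture with no signature case analysis. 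You instead work in the full four-dimensional space, transport the quadric \eqref{eq: E^*Theta=0} through the linear map $p\mapsto E^*(C)$ onto the space of binary cubics, and show the resulting form $Q$ is onto $\bbR$ on the cone $\{\Delta>0\}$ by exhibiting hyperbolic cubics of each sign together with the homogeneity $Q(tc)=t^2Q(c)$, $\Delta(tc)=t^4\Delta(c)$. I checked your two normal-form expressions $Q=c_0^2-3c_0c_2+3c_1c_3-c_3^2$ (split case) and $Q=c_0^2+3c_0c_2+3c_1c_3+c_3^2$ (definite case), the evaluations $Q=+4,0,-4$ on $x^3-3xy^2$, $xy(x-y)$, $y^3-3x^2y$, and the symmetric-function argument with roots $\{R,-R,0\}$ versus three large nearly-equal roots: all correct. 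The one step you assert rather than prove --- that $p\mapsto E^*(C)$ is a linear isomorphism onto binary cubics --- is true and is a short inversion of the explicit coefficient formulas; note it is equivalent to the paper's own observation that $E^*(C)$ vanishes if and only if all $p_{ijk}(E)=0$, since an injective linear map between $4$-dimensional spaces is an isomorphism. On balance, your argument is more self-contained (no appeal to K\"ahler-regularity) and makes the algebraic structure transparent, at the cost of a signature case analysis and somewhat heavier computation; the paper's slice $L_u$ buys brevity and uniformity across signatures and curvature values.
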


\begin{proof}
It suffices to prove that the basepoint projection 
$\cV^h_2(\cI_+,\omega^1\w\omega^2)\to M$ is surjective, 
since every element of $\cV^h_2(\cI_+,\omega^1\w\omega^2)$ is a Kähler-regular
integral element and the ideal $\cI_+$ is generated in positive degree.

Also, because the right action by $\mathrm{GL}(2,\mathbb{R})$ on $F$ is transitive
on the fibers of $F\to M$, it suffices to consider an element $u\in F$
at which all of $G_{11}$, $G_{12}$ and $G_{22}$ are nonzero, so I will assume this
hencforth.

Consider the locus $L_u\subset\cV_2(\cI_+,\omega^1\w\omega^2)$ in
the fiber over $u$ that is defined
by the two equations 
$$
G_{22}p_{122}-G_{12}p_{222}=0
\quad\text{and}\quad 
G_{12}p_{111}-G_{11}p_{112}=0.
$$
Because $G_{11}G_{22}-(G_{12})^2\not=0$, 
there will exist unique functions $s_1$ and $s_2$ on $L$ such that 
$$
p_{111}=G_{11}s_1\,,\ p_{112}=G_{12}s_1\,,\ p_{122}=G_{12}s_2\,,\ p_{222}=G_{22}s_2\,.
$$
These values for $p_{ijk}$ will define an integral element if and only if
they satisfy \eqref{eq: E^*Theta=0}, which turns out to be
\begin{equation}\label{eq: Q as s}
(G_{11}G_{22}{-}{G_{12}}^2)^2\,K 
- G_{12}(G_{11}s_2{-}G_{12}s_1)(G_{21}s_2{-}G_{22}s_1) = 0.
\end{equation}
Because $G_{12}$ and $G_{11}G_{22}-{G_{12}}^2$ are nonvanishing, this equation
defines either a hyperbola or a pair of transverse lines in the $s_1s_2$-plane,
depending on whether $K$ is zero or not.

Meanwhile, the characteristic cubic at points of $L_u$ 
is (up to a nonzero multiple) given by
$$
\begin{aligned}
C &= -\bigl((G_{11}G_{22}+{G_{12}}^2)s_1-2G_{11}G_{12}s_2\bigr)\,(\omega^1)^2\omega^2\\
& \qquad +\bigl((G_{11}G_{22}+{G_{12}}^2)s_2-2G_{22}G_{12}s_1\bigr)\,\omega^1(\omega^2)^2.
\end{aligned}
$$
Obviously $C$ will have three real distinct factors as long as both quantities 
$(G_{11}G_{22}+{G_{12}}^2)s_1-2G_{11}G_{12}s_2$ and $(G_{11}G_{22}+{G_{12}}^2)s_2-2G_{22}G_{12}s_1$ are nonzero.  The vanishing of these quantities define two lines $\ell_1$
and $\ell_2$ in the $s_1s_2$-plane that intersect transversely at $s_1=s_2=0$. 

When $K$ is nonzero, the hyperbola \eqref{eq: Q as s} in the $s_1s_2$-plane 
is not contained in the union of the two lines $\ell_1$ and $\ell_2$.  Hence
there is a choice of $(s_1,s_2)$ that satisfies \eqref{eq: Q as s} for which $C$
has three real distinct factors.

When $K=0$, one checks that the two (distinct) lines in the $s_1s_2$-plane 
defined by $G_{11}s_2{-}G_{12}s_1=0$ and $G_{21}s_2{-}G_{22}s_1=0$ 
only meet the lines $\ell_1$ and $\ell_2$ in the origin $s_1=s_2=0$.  
Hence, again there is a choice of $(s_1,s_2)$ that satisfies \eqref{eq: Q as s} 
for which $C$ has three real distinct factors.

Such a choice of $(s_1,s_2)$ defines an element of $L_u$ 
that lies in $\cV^h_2(\cI_+,\omega^1\w\omega^2)$, proving the claimed
surjectivity.
\end{proof}
 
\begin{remark}[Universal hyperbolicity]
It is not always true that each~$u\in F$ 
has a $2$-dimensional integral element~$E\subset T_uF$
for which the characteristic cubic~$E^*(C)$ is the product 
of a linear factor and an irreducible quadratic factor 
(the other open $\GL(2,\bbR)$-orbit).  For example, if $g>0$, then such 
integral elements exist only when $K(u)<0$.  In fact, when $g>0$ and $K>0$, 
one finds that \emph{all} of the K\"ahler-ordinary integral elements are hyperbolic.
\end{remark}

\begin{theorem}\label{thm: smooth}
Let~$\cI^{(1)}$ denote the contact ideal 
on~$\cV^h_2(\cI_+,\omega^1\w\omega^2)\subset \Gr_2(TF)$, endowed
with its canonical independence condition~$\omega$.  
Then~$(\cI^{(1)},\omega)$ is involutive hyperbolic in the sense
of Yang~\cite{MR0897707}.  

Consequently, any smooth nondegenerate quadratic form on~$M^2$
is locally Hessianizable.
\end{theorem}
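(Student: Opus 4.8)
The plan is to pass to the prolongation and invoke Yang's smooth existence theory for involutive hyperbolic exterior differential systems~\cite{MR0897707}. The reason one cannot apply a smooth Cartan--K\"ahler argument directly to $(\cI_+,\omega^1\w\omega^2)$ is that the hyperbolicity needed for the smooth theory---three distinct real characteristics---holds only on the sublocus $\cV^h_2(\cI_+,\omega^1\w\omega^2)$ of integral elements, not at every $u\in F$. Replacing the original system by the contact ideal $\cI^{(1)}$ on this Grassmann sublocus is exactly what restricts attention to tangent planes along which the characteristic cubic $C$ splits into distinct real linear factors.

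First I would record that $\cV^h_2(\cI_+,\omega^1\w\omega^2)$ is a smooth manifold and that $(\cI^{(1)},\omega)$ is involutive. Smoothness follows from the earlier description of $\cV_2$ as a hypersurface in $F\times\bbR^4$ cut out by the quadratic relation~\eqref{eq: E^*Theta=0}, together with the fact that the hyperbolic locus avoids the singular set $\{p_{ijk}=K=0\}$, since $E^*(C)$ vanishes there. Involutivity of the contact ideal follows from the involutivity of $(\cI_+,\omega^1\w\omega^2)$ proved in Theorem~\ref{thm: involutivity}: the prolongation of an involutive system is again involutive~\cite{MR1992h:58007}, and passing to the open subset $\cV^h_2$ preserves this pointwise property. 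The Cartan characters remain $s_0=1$, $s_1=3$, $s_2=0$, so the last nonzero character is $s_1=3$, matching the degree of the characteristic cubic.

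Next I would verify hyperbolicity in Yang's sense. By the very definition of $\cV^h_2(\cI_+,\omega^1\w\omega^2)$, at each of its points the characteristic cubic $E^*(C)$ of~\eqref{eq: E^*C} is nonzero and has three distinct real linear factors, i.e.\ $\Delta\bigl(E^*(C)\bigr)>0$. Because the number of characteristic points equals the top character $s_1=3$ and these are real and simple, this is precisely the condition that the principal symbol factors into distinct real linear forms, which is Yang's notion of an involutive hyperbolic system. I expect this matching step to be the main obstacle: one must check carefully that the genericity and symbol hypotheses of~\cite{MR0897707} are indeed implied by $\Delta\bigl(E^*(C)\bigr)>0$ together with the tableau data $s_0=1$, $s_1=3$, $s_2=0$.

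Finally I would deduce smooth Hessianizability. Given $p\in M$, Lemma~\ref{lem: hyperbolic} supplies an integral element $E\in\cV^h_2(\cI_+,\omega^1\w\omega^2)$ lying over $p$, and through it one constructs a smooth, non-characteristic initial integral curve---the smooth analogue of the real-analytic curve $\Gamma$ built in the proof of Theorem~\ref{thm: involutivity}, with $\Gamma^*\psi\neq0$ guaranteeing that the initial direction is non-characteristic. Yang's theorem then produces a smooth integral surface of $(\cI^{(1)},\omega)$ containing this curve. Projecting it through $\cV^h_2\to F$ yields a smooth integral surface $S\subset F$ of $(\cI_+,\omega^1\w\omega^2)$ with $\omega^1\w\omega^2\neq0$, and, by the correspondence between such surfaces and local Hessian representations established earlier, a neighborhood of $p$ in $S$ is the image of a Hessianizing section $\xi:U\to F$. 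This gives the desired smooth local Hessian form for $g$ near $p$.
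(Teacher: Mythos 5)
Your overall route is the same as the paper's: prolong to the contact system on the hyperbolic locus, get involutivity from the prolongation theorem, identify hyperbolicity via the three distinct real roots of the characteristic cubic, and invoke Yang's smooth existence theorem together with Lemma~\ref{lem: hyperbolic} to conclude. But two points need fixing. First, your claim that ``the Cartan characters remain $s_0=1$, $s_1=3$, $s_2=0$'' is false: prolongation does not preserve $s_0$. The contact ideal $\cI^{(1)}$ on the $9$-dimensional manifold $\cV^h_2(\cI_+,\omega^1\w\omega^2)$ is generated in degree one by \emph{four} forms --- the pullback of $\theta$ together with the three contact forms $\sigma_{ij}-p_{ijk}\,\omega^k$ --- so its characters are $s_0=4$, $s_1=3$, $s_2=0$, as the paper states. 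Only $s_1=3$ survives prolongation, and that is the character relevant to hyperbolicity, so the error does not derail the argument, but as written the character count is wrong.

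Second, and more substantively, you explicitly defer the step you yourself identify as ``the main obstacle'': checking that $\Delta\bigl(E^*(C)\bigr)>0$ downstairs actually verifies Yang's hypotheses for the prolonged system $(\cI^{(1)},\omega)$. This is a genuine gap in your write-up, because Yang's definition concerns the characteristic variety of integral elements of $\cI^{(1)}$ itself, not of the underlying elements $E\in\cV^h_2$. The paper closes this in one line with the standard fact that \emph{ordinary prolongation preserves the characteristic variety}~\cite{MR1992h:58007}: hence each integral element of $(\cI^{(1)},\omega)$ has characteristic variety consisting of exactly three real, distinct points, which is precisely Yang's involutive hyperbolicity --- no further genericity or symbol hypotheses need to be checked. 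With that fact supplied, your final step (using Lemma~\ref{lem: hyperbolic} to place a hyperbolic integral element over any $p\in M$ and quoting Yang's Theorem~4.11 of~\cite{MR0897707} for a smooth hyperbolic integral surface, then projecting to a Hessianizing section) matches the paper's deduction; note that Yang's theorem already provides the integral surface through a given point, so your explicit construction of a smooth non-characteristic initial curve, while harmless, is not needed.
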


\begin{proof}
Since~$(\cI^{(1)},\omega)$ on $\cV^h_2(\cI_+,\omega^1\w\omega^2)$
is an open subset of the ordinary prolongation 
of the involutive system~$(\cI_+,\omega^1\w\omega^2)$, 
it is involutive, with Cartan characters $s_0=4$, $s_1=3$, $s_2=0$.
Moreover, because ordinary prolongation preserves the characteristic
variety, it follows that the characteristic variety of each
integral element of~$(\cI^{(1)},\omega)$ consists of three real,
distinct points.  Thus, $(\cI^{(1)},\omega)$ is involutive hyperbolic
in the sense of Yang~\cite{MR0897707}.  

In particular, it follows from Theorem~4.11 in~\cite{MR0897707} 
that, when $g$ is a smooth nondegenerate quadratic form on~$M^2$, 
each $u\in F$ lies in a smooth, ordinary, \emph{hyperbolic} 
integral surface of~$\bigl(\cI_+,\omega^1\w\omega^2\bigr)$.
Consequently, any smooth nondegenerate quadratic form 
on a surface can locally be written in Hessian form.
\end{proof}

\bibliographystyle{hamsplain}

\providecommand{\bysame}{\leavevmode\hbox to3em{\hrulefill}\thinspace}

\end{document}